\newtheorem{theorem}{Theorem}[]
\newtheorem{example}[theorem]{Example}
\newtheorem{lemma}[theorem]{Lemma}
\newtheorem{proposition}[theorem]{Proposition}
\newtheorem{definition}[theorem]{Definition}
\newtheorem{remark}[theorem]{Remark}
\newcommand{\cout}[1]{}
\newcommand{\T}{\mathcal{T}}
\newcommand{\D}{\mathrm{d}}
\newcommand{\lb}{\left(}
\newcommand{\rb}{\right)}
\newcommand{\OM}{\omega}
\newcommand{\wt}{\widetilde}
\newcommand{\Ac}{\mathcal{A}}
\newcommand{\Bc}{\mathcal{B}}
\newcommand{\Cc}{\mathcal{C}}
\newcommand{\Oc}{\mathcal{O}}
\newcommand{\Rc}{\mathcal{R}}
\newcommand{\Tc}{\mathcal{T}}
\newcommand{\Rb}{\mathbb{R}}
\newcommand{\Sb}{\mathbb{S}}
\newcommand{\g}{\boldsymbol{\gamma}}
\newcommand{\Beq}{\begin{equation}}
\newcommand{\Eeq}{\end{equation}}
\newcommand{\beq}{\begin{equation*}}
\newcommand{\eeq}{\end{equation*}}
\newcommand{\bal}{\begin{align}}
\newcommand{\eal}{\end{align}}
\title{Microlocal inversion of a 3-dimensional restricted transverse ray transform of symmetric $m$-tensor fields}
\author{Venkateswaran P. Krishnan\thanks{Tata Institute of Fundamental Research, Centre For Applicable Mathematics, Bangalore, India. vkrishnan@math.tifrbng.res.in} \and Rohit Kumar Mishra\thanks{Department of Mathematics, University of California, Santa Cruz, CA 95064, USA. rokmishr@ucsc.edu} \and Suman Kumar Sahoo\thanks{Tata Institute of Fundamental Research, Centre For Applicable Mathematics, Bangalore, India. suman@tifrbng.res.in}}
\begin{document}
\maketitle
\begin{abstract}
We study the problem of inverting a  restricted transverse ray transform to recover a symmetric $m$-tensor field  in $\Rb^{3}$ using microlocal analysis techniques. More precisely, we prove that a symmetric $m$-tensor field can be recovered up to a known singular term and a smoothing term if its transverse ray transform is known along all lines intersecting a fixed smooth curve satisfying the Kirillov-Tuy condition.
\end{abstract}

	\section{Introduction}\label{sec:introduction}

The study of transverse ray transforms (TRT) of symmetric tensor fields is of interest in problems arising in polarization and diffraction tomography. 
We are interested in an approximate inversion of a   TRT acting on symmetric tensor fields restricted to all lines passing through a fixed curve in $\Rb^{3}$. 
More precisely, we use techniques from microlocal analysis to construct a relative left parametrix for such restricted TRT. 

We denote the space of covariant symmetric $m$-tensors in $\Rb^{3}$ by $S^{m}=S^{m}(\Rb^{3})$. Let $C_{c}^{\infty}(S^{m})$ be the space of smooth compactly supported symmetric $m$-tensor fields in $\Rb^{3}$. In $\Rb^{3}$,  an element $f\in C_{c}^{\infty}(S^{m})$ can be written as 
\[
f(x)=f_{i_{1}\cdots i_{m}}(x)\D x^{i_{1}}\cdots \D x^{i_{m}},
\] with $\{f_{i_{1}\cdots i_{m}}(x)\}$ symmetric in its indices, smooth and compactly supported. With repeating indices, Einstein summation convention will be assumed throughout this paper.

Let $ T\Sb^2  = \{(\omega, x) \in  \Sb^2 \times \Rb^3: \omega\cdot x = 0\}$ be the tangent bundle of the unit sphere $\Sb^2 \subset \Rb^3$ and let
$$ T\Sb^2  \oplus T\Sb^2  = \{(\omega, x, y) \in \Sb^2 \times \Rb^3 \times \Rb^3: \omega \cdot x =0 ,  \omega \cdot y  =0\}$$
be the Whitney sum. 
\begin{definition}[\cite{Sharafutdinov_Book}]
	The transverse ray transform $\Tc: C_c^\infty(S^m) \rightarrow C^\infty(T\Sb^2  \oplus T\Sb^2)$ is the bounded linear map defined by 
	\begin{align*}
	\Tc f(\omega, x,y) =  \int_\Rb \langle f(x+t\omega), y^{\odot m}\rangle \D t,
	\end{align*}
	where $y^{\odot m}$ denotes the $m^{\mathrm{th}}$ symmetric tensor product of $y$ and $\langle f(x),y^{\odot m}\rangle$ is defined by $f_{i_{1}\cdots i_{m}}(x)y^{i_{1}}\cdots y^{i_{m}}$.
\end{definition}

We will find it more convenient to work with an equivalent vectorial version of TRT which we define below.
Let $\omega\in \Sb^{2}$ be represented in spherical coordinates by  
\[
\omega= \lb \cos \theta_1,\sin\theta_{1}\cos \theta_{2},\sin\theta_{1}\sin\theta_{2}\rb
\]
where $0 \leq \theta_1 <\pi$ and $0 \leq \theta_{2} < 2\pi$. Consider the orthonormal frame $\{\OM, \OM_1,\OM_2\}$ with $\OM_{1}$ and $\OM_2$ defined by 
\begin{align}\label{omegai's}
& \OM_{1}=\lb -\sin \theta_{1},\cos \theta_{1}\cos \theta_{2},\cos \theta_{1}\sin \theta_{2}\rb \mbox{ and }  \OM_{2}=\lb 0,-\sin \theta_{2},\cos \theta_{2}\rb.
\end{align}

We define the vectorial version of $\Tc$ as follows: 
\begin{definition}
	\label{Vectorial TRT}
	For $ 0 \leq i\leq m$, define $\Tc=\lb\mathcal{T}_{i}\rb
	: C^{\infty}_{c}(S^m) \to \lb C^\infty(T\Sb^{2})\rb^{m+1}$  by
	\begin{align}\label{def:TRT}
	\mathcal{T}_{i} f(x, \OM) = \int _{\mathbb{R}}  f_{j_1j_2 \cdots j_m}(x+t\omega)\omega^{j_1}_{1}\cdots \OM^{j_{m-i}}_{1} \omega^{j_{m-(i-1)}}_{2}\cdots \omega^{j_m}_{2} \D t.
	\end{align} 
\end{definition}
It is straightforward to see that these two definitions are equivalent.

In $2$-dimensions, TRT and the standard ray transform \cite{Sharafutdinov_Book}, also called the longitudinal ray transform (LRT),  give equivalent information and it is well-known that the latter transform on symmetric tensor fields  has an infinite dimensional kernel. Hence it is not possible to reconstruct the full tensor field $f$ from its transverse ray transform in $2$-dimensions. Furthermore, the space of lines in $\Rb^{n}$ is $2n-2$ dimensional, and in dimensions $n\geq 3$, 
the problem of recovery of $f$ from $\Tc f$ is over-determined. 
Therefore a natural question is to investigate the inversion of $\Tc$ restricted to an $n$-dimensional data set.  We address this problem for the case of dimension $n=3$ in this paper, and the 3-dimensional set of lines we choose is the set of all lines passing through a fixed curve $\g\in \Rb^3$.

The inversion of TRT and of the corresponding non-linear problem appearing in polarization tomography has been considered in several prior works \cite{Sharafutdinov_Book,Novikov-Sharafutdinov,Sharafutdinov_PolarizationII,Holman, Holman2, Lionheart-Withers,Desai-Lionheart,Griesmaier2018,Krishnan-Monard-Mishra,Lionheart_2020}. With respect to the study of restricted TRT, we refer to the works \cite{Lionheart-Sharafutdinov,Desai-Lionheart}. Recently a support theorem for TRT in the setting of analytic simple Riemannian manifolds was considered by \cite{Abhishek-supporttheorem}.

We study the inversion of restricted TRT using  microlocal analysis techniques. We are interested in the reconstruction of singularities of the symmetric tensor field $f$ given its restricted TRT.  The study of generalized Radon transforms in the framework of Fourier integral operators began with the fundamental work of Guillemin \cite{Guillemin} and  Guillemin-Sternberg \cite{Guillemin-Sternberg-AJM}. Since then, microlocal analysis has become a very powerful tool in the study of tomography problems;  see \cite{GS, Greenleaf-Uhlmann-Duke1989,  Boman-Quinto-Duke, Boman1993,SU1, Katsevich2002, Lan2003,Ramaseshan2004, SU2, SU3,SSU,Uhlmann-Vasy,K1,Abhishek-Mishra}. Of these works, the  paper \cite{Greenleaf-Uhlmann-Duke1989} is a fundamental work where Greenleaf and Uhlmann studied a restricted ray transform on functions in the setting of Riemannian manifolds.  However, most of these works are done for LRT and to the best of our knowledge, other than the support theorem result \cite{Abhishek-supporttheorem}, we are not aware of any prior work that studies a restricted TRT from the view point of microlocal analysis.

Specifically, we study the microlocal inversion of the Euclidean TRT on symmetric $m$-tensor fields given the restricted data set consisting of all lines passing through a fixed curve $\g$ in $\Rb^{3}$. The transverse ray transform $\Tc$ defined in \eqref{def:TRT} restricted to lines passing through the curve $\g$ will be denoted by $\Tc_{\g}$ and its formal $L^{2}$ adjoint by $\Tc_{\g}^{*}$. We determine the extent to which the wavefront set of a symmetric $m$-tensor field can be recovered from the wavefront set of its restricted TRT. We are motivated by the related works done for restricted LRT \cite{Greenleaf-Uhlmann-Duke1989,LanThesis, Lan2003,Ramaseshan2004,Krishnan-Mishra} and we mainly follow the techniques from these works.

The article is organized as follows. \S \ref{Sect: Preliminary} is devoted to stating some preliminary results about the restricted TRT, to some fundamental results about distributions associated to two cleanly intersecting Lagrangians introduced in \cite{Melrose-Uhlmann, Guillemin-Uhlmann, Greenleaf-Uhlmann-Duke1989}, the microlocal results relevant for the analysis of our transform, and the statement of the main result.  
We give the proof of the main results in \S \ref{Sect:Principal Symbol} and \S \ref{Sect: Microlocal Inversion}. 
	\section{Preliminaries and statement of the main result}\label{Sect: Preliminary}
We first state precisely the conditions imposed on the curve $\g$, and the wavefront set directions that are potentially recoverable based on microlocal analysis of the restricted transverse ray transform.

Let $B$ be a ball in $\Rb^3$. Let $\g$ be a smooth regular curve without self-intersections in $\Rb^3$ defined on a bounded interval and with its range in the complement of $B$. We assume that there are uniform upper and lower bounds on the number of intersection points of almost every hyperplane passing through the set $B$ with the curve $\g$, and that the lower bound is at least $m+1$ (where $m$ is the order of the tensor field under consideration). This condition on the lower bound is a modified form of so-called Kirillov-Tuy condition. 

For our microlocal analysis approach to work, we need to restrict ourselves to certain wavefront set directions that we can potentially recover. The sets defined below (see \cite{Greenleaf-Uhlmann-Duke1989, Ramaseshan2004,Krishnan-Mishra}) are motivated by this restriction. Given $(x,\xi)\in T^{*}B \setminus \{0\}$, we denote by $H(x,\xi)$, the plane passing through $x$ and perpendicular to $\xi$. The points $\{\g(t_i)\}$ on the curve $\g$ below refer to the points of intersection of the curve $\g$ and the hyperplane $H(x,\xi)$.


\begin{align}
\notag  \label{XiDef}\Xi =& \Big{\{}
(x,\xi)\in T^*B\setminus \{0\}:   \text{there exists at least $m+1$ points $\{ \g(t_j)\}_{j=1}^{m+1}$ } \text{such that  }\\
 	&\text{all pairs of vectors from }  \{\lb x-\g(t_{j})\rb \}_{j=1}^{m+1} \text{ are linearly independent}\Big{\}}.
\end{align}
\begin{align*}
\Xi'&= \Big{\{}
(x,\xi)\in \Xi :\text{all the intersection points of } H(x,\xi) \text{ with } \g \text{ are transverse}\Big{\}}.\\
\Xi'' &= \Big{\{}
(x,\xi)\in\Xi: \text{ the tangential intersection points $\{\g(t_j))\}$ satisfy} \langle \g^{\prime \prime}(t_{j}),\xi\rangle\neq 0 \Big{\}}.
\end{align*}
The potentially recoverable singularities belong to the union $\Xi'\cup \Xi''$ (see the statement of Theorem \ref{Main theorem} for a more precise description). Therefore, without loss of generality, in all the analysis below we will restrict ourselves to the cotangent directions in this union. Below we give an example of a curve $\g$ satisfying the Kirillov-Tuy condition for vector fields and also discuss the corresponding $\Xi$, $\Xi'$, and  $\Xi''$.
\begin{example}[\cite{Tuy,Vertgeim}]\label{Example 2.1}
	For $ m=1 $ (vector fields), consider the curve $ \g $ as the union of three orthogonal great circles on the sphere of radius 2 (the equator and the meridians of $ 0^{\circ} $ and $ 90^{\circ} $) and center at the origin. Then every plane $H$ intersecting  the unit ball $B$ will intersect the curve $\g$ at least two different points $\g_1$ and $\g_2$. And for almost every $x \in H$, the vectors $x - \g_1$ and $x - \g_2$ are linearly independent.
\end{example}
With respect to above example, we have $ \Xi = \Xi'=T^{*}B\setminus\{0\} = B\times \mathbb{R}^{n}\setminus\{0\}$ and $\Xi''$ is the empty set. In this case, the potentially recoverable singularities consists of the set $ \Xi' $. Instead of three great circles, if we consider the curve $ \g $  to be union of two orthogonal great circles of radius $ 2 $, then the curve $ \g $ satisfies the Kirillov-Tuy condition of order one for almost every hyperplane. In that case, we have to exclude a set of measure zero from $ \Xi=\Xi'=T^{*}B\setminus\{0\} = B\times \mathbb{R}^{n}\setminus\{0\}$ and $ \Xi'' $ is the same as above.


Next we state some preliminary microlocal results concerning the operators $\Tc_{\g}$ and $\Tc_{\g}^{*}\Tc_{\g}$. The proofs of these statements follow by suitable adaptations of the arguments given in \cite{LanThesis,Krishnan-Mishra} and therefore we skip them.


Let us denote by $\Cc$, the line complex consisting of all lines  passing through the curve $\g$. 
Let $\ell$ be a line in $\Cc$ and 
\[
Z=\{(\ell,x): x\in \ell\} \subset \Cc\times \Rb^{3}\]
be the point-line relation. For given $t$ (in the domain of $\g$) and $\omega = (\theta_1, \theta_2) \in \Sb^2$, we can define a unique line $\ell \in \Cc$ by $\ell =\{\g(t) + s \omega : s \in \Rb\}$. Therefore,  we have that $(t,\OM,s)$ is a local parametrization of $Z$. The conormal bundle of $Z$ is given by
 $$N^{*}Z = \left\{ \left(\ell, x; \Gamma, \xi\right) : (\ell, x) \in Z \text{ and } (\Gamma, \xi)|_{T_{(\ell, x)}Z}  = 0 \right\}.$$
It has been shown in \cite{LanThesis,Krishnan-Mishra} that $N^{*}Z$ can be parametrized by $\{(t,\OM,s,\Gamma,\xi)\}$ where 

\Beq\label{The vector xi}  
\xi = z_{1}\OM_{1} + z_{2}\OM_{2}\mbox{ for some } z_{1} \mbox{ and } z_{2}\in \Rb,
\Eeq 
and $\OM_{1},\OM_2$ are given by \eqref{omegai's}, and 
\Beq\label{Gamma}
\Gamma=
\begin{pmatrix}
	\Gamma_{1}\\
	\Gamma_{2}\\
	\Gamma_{3}
\end{pmatrix}=\begin{pmatrix}
	-\xi \cdot \g'(t)\\ -s z_{1}\\ -s z_{2} \sin \theta_{1}\\
\end{pmatrix}.
\Eeq

\begin{lemma} The  map 
	\[
	\Phi: (t, \theta_{1},\theta_{2},s,z_{1},z_{2})\to (t, \theta_{1},\theta_{2},\Gamma; x,\xi)
	\]
	with $\Gamma$ as in \eqref{Gamma}, $\xi$ as in \eqref{The vector xi} and $x=\g(t)+s\OM$ gives a local parametrization of $N^{*}Z$ at the points where $\theta_{1}\neq 0,\pi$.
\end{lemma}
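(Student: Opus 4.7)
The plan is to verify two things: first, that the image of $\Phi$ actually lies in $N^*Z$, and second, that $\Phi$ is a local diffeomorphism onto an open subset of $N^*Z$ wherever $\theta_1 \neq 0, \pi$.

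For the first step, I would begin from the given parametrization $(t,\theta_1,\theta_2,s)\mapsto((t,\OM),\g(t)+s\OM)$ of $Z\subset \Cc\times\Rb^3$ and compute the four coordinate tangent vectors, using $\partial_{\theta_1}\OM=\OM_1$ and $\partial_{\theta_2}\OM=\sin\theta_1\,\OM_2$. The tangents at $((t,\OM),x)$ push forward to $\Rb^3$ as $\g'(t)$, $s\OM_1$, $s\sin\theta_1\,\OM_2$, and $\OM$ respectively, with the corresponding elementary moves on the $\Cc$-factor. Imposing the conormal annihilation condition $\Gamma_1\dot t + \Gamma_2\dot\theta_1+\Gamma_3\dot\theta_2 + \xi\cdot \dot x=0$ in each of the four directions would yield
\begin{equation*}
\Gamma_1 = -\xi\cdot \g'(t),\quad \Gamma_2 = -s\,\xi\cdot\OM_1,\quad \Gamma_3 = -s\sin\theta_1\,\xi\cdot \OM_2,\quad \xi\cdot \OM = 0.
\end{equation*}
The last relation confines $\xi$ to the plane $\OM^\perp=\mathrm{span}\{\OM_1,\OM_2\}$, so writing $\xi=z_1\OM_1+z_2\OM_2$ reproduces formulas \eqref{The vector xi} and \eqref{Gamma} exactly; hence $\Phi$ lands in $N^*Z$.

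For the second step, I would note the dimension count $\dim N^*Z = \dim(\Cc\times\Rb^3)=6$, which matches the six-dimensional source of $\Phi$. Since $t,\theta_1,\theta_2$ appear unchanged among the output coordinates, injectivity of $d\Phi$ reduces to showing that varying $(s,z_1,z_2)$ with $(t,\theta_1,\theta_2)$ held fixed moves $(x,\xi)$ in three linearly independent directions. This is immediate from $\partial_s x=\OM$, $\partial_{z_1}\xi=\OM_1$, $\partial_{z_2}\xi=\OM_2$ (while the other block of partials vanishes), combined with the orthonormality of $\{\OM,\OM_1,\OM_2\}$ whenever $\theta_1\neq 0,\pi$. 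An explicit inverse can in fact be written down: $s=(x-\g(t))\cdot \OM$, $z_1=\xi\cdot\OM_1$, $z_2=\xi\cdot\OM_2$; combined with the rank condition this gives a diffeomorphism onto its image.

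The only delicate point is the pole condition $\theta_1 \neq 0,\pi$: at $\theta_1=0$ or $\pi$ the spherical parametrization of $\Sb^2$ degenerates, $\partial_{\theta_2}\OM$ vanishes, and correspondingly the relation $\Gamma_3=-sz_2\sin\theta_1$ becomes insensitive to $z_2$, so $d\Phi$ drops rank. Away from the poles the entire argument is a routine verification, and I do not anticipate any substantial obstacle.
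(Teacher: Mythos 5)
Your proof is correct and follows the standard argument that the paper itself defers to \cite{LanThesis,Krishnan-Mishra}: derive the conormal conditions by pairing $(\Gamma,\xi)$ against the four coordinate tangent vectors of $Z$, recover \eqref{The vector xi} and \eqref{Gamma}, and then check that $\Phi$ is an immersion between manifolds of the same dimension six, with the explicit inverse $s=(x-\g(t))\cdot\OM$, $z_i=\xi\cdot\OM_i$. The only cosmetic point is that the frame $\{\OM,\OM_1,\OM_2\}$ is in fact orthonormal for \emph{every} $(\theta_1,\theta_2)$; the genuine reason for excluding $\theta_1=0,\pi$ is that $(\theta_1,\theta_2)$ then fail to be coordinates on $\Sb^2$ (equivalently $\partial_{\theta_2}\OM=\sin\theta_1\,\OM_2$ vanishes, so $(t,\theta_1,\theta_2,s)$ no longer parametrizes $Z$), which is essentially what your closing paragraph identifies.
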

\begin{proposition}
	Each component of the operator $\Tc_{\g}$ is a Fourier integral operator of order $-1/2$ with the associated canonical relation $C$ given by $(N^{*}Z)'$ where $Z=\{(\ell,x): x\in \ell\}$.
	The left and the right projections $\pi_{L}$ and $\pi_{R}$ from $C$ drop rank simply by $1$ on the  set 
	\Beq\label{The set Sigma}\Sigma:=
	\{(t,\theta_{1},\theta_{2}, s, z_{1},z_{2}): \g'(t)\cdot \xi=0\},
	\Eeq where $\xi$ is given by \eqref{The vector xi}. The left projection $\pi_{L}$ has a blowdown singularity along $\Sigma$ and the right projection $\pi_{R}$ has a fold singularity along $\Sigma$. 
\end{proposition}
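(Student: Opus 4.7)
The plan is to establish the FIO structure and order in a first stage, then analyze the two projections from the explicit parametrization $\Phi$ in a second stage.

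For the FIO structure, I would write the Schwartz kernel of each component $\Tc_{\g,i}$ as the oscillatory integral
\[
K_i(t,\OM,x) = (2\pi)^{-3}\int_{\Rb\times\Rb^3} p_i(\OM_1,\OM_2)\, e^{\I\xi\cdot(x-\g(t)-s\OM)}\, \D\xi\, \D s,
\]
where $p_i$ is a product of components of $\OM_1,\OM_2$ matching the tensor indices of $\Tc_{\g,i}$. The phase $\phi=\xi\cdot(x-\g(t)-s\OM)$ is easily checked to be non-degenerate. Solving $d_{(s,\xi)}\phi=0$ gives the incidence relation $x=\g(t)+s\OM$ together with $\xi\cdot\OM=0$, so $\xi\in\mathrm{span}(\OM_1,\OM_2)$, and one recovers $\xi=z_1\OM_1+z_2\OM_2$. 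Reading off $d_{(t,\theta_1,\theta_2)}\phi$ on the critical set reproduces \eqref{Gamma} and identifies the canonical relation with $(N^*Z)'$. A standard bookkeeping of orders (four phase variables against equal domain and codomain dimensions three, with amplitude independent of the phase variables) yields the stated FIO order $-1/2$.

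In the second stage, I would compute the Jacobians of $\pi_L$ and $\pi_R$ directly from $\Phi$. For $\pi_L$, the coordinates $(t,\theta_1,\theta_2)$ are common to source and target, so the only nontrivial block is $(s,z_1,z_2)\mapsto(\Gamma_1,\Gamma_2,\Gamma_3)$, whose determinant is
\[
s\sin\theta_1\,(\xi\cdot\g'(t)).
\]
Away from $s=0$ and $\theta_1\in\{0,\pi\}$, this vanishes precisely on $\Sigma$, the corank is exactly $1$, and the kernel of $d\pi_L|_\Sigma$ is spanned by $v=\partial_s-\tfrac{z_1}{s}\partial_{z_1}-\tfrac{z_2}{s}\partial_{z_2}$. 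Since the defining $1$-form $d(\xi\cdot\g'(t))$ of $\Sigma$ pairs with $v$ to give $-(\xi\cdot\g'(t))$, which vanishes identically on $\Sigma$, we obtain $\ker d\pi_L\subset T\Sigma$; this is precisely the blowdown condition.

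For $\pi_R$, the corresponding computation in the same parametrization shows that $d\pi_R$ also drops rank exactly on $\Sigma$ with a one-dimensional kernel---this is forced by the fact that for any canonical relation, $d\pi_L$ and $d\pi_R$ have the same corank at each point. Using the identities $\partial_{\theta_1}\OM_1=-\OM$, $\partial_{\theta_1}\OM_2=0$, and $\partial_{\theta_2}\OM_2=-\sin\theta_1\,\OM-\cos\theta_1\,\OM_1$, one writes an explicit spanning vector $w$ for $\ker d\pi_R|_\Sigma$ whose coefficients are the components of $\g'(t)$ in the orthonormal frame $\{\OM,\OM_1,\OM_2\}$. Testing $d(\xi\cdot\g'(t))$ against $w$, most contributions cancel after invoking $z_1(\OM_1\cdot\g'(t))+z_2(\OM_2\cdot\g'(t))=0$, and what remains is a nonzero multiple of $\xi\cdot\g''(t)$. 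Under the nondegeneracy assumption built into the definition of $\Xi_\Lambda$, this is nonzero, so $\ker d\pi_R\pitchfork T\Sigma$, yielding the Whitney fold condition.

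The main technical obstacle is the cancellation bookkeeping in this last step: tracking the contributions of all six fiber and base derivatives through the spherical-coordinate frame and ensuring the cancellation uses $\xi\cdot\g'(t)=0$ correctly. Because the underlying incidence variety $Z$, and hence the canonical relation $(N^*Z)'$, is identical to that of the restricted longitudinal ray transform, the entire geometric analysis runs parallel to \cite{LanThesis,Krishnan-Mishra}; only the symbol $p_i$ distinguishes the transverse transform and does not enter the projection analysis.
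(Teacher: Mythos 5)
Your overall strategy is the standard one and is essentially what the paper itself relies on: the paper gives no proof of this proposition, deferring to the longitudinal-ray-transform analysis in \cite{LanThesis,Krishnan-Mishra}, precisely because (as you observe at the end) the incidence relation $Z$, and hence $(N^{*}Z)'$, is the same for $\Tc_{\g}$ as for the restricted scalar transform, with only the amplitude changed. Your second stage is correct and essentially complete in outline: the determinant of the $(s,z_{1},z_{2})\mapsto\Gamma$ block is indeed $s\sin\theta_{1}\,(\xi\cdot\g'(t))$; your vector $v$ is annihilated by $d\Gamma$ and pairs to zero with $d(\xi\cdot\g'(t))$ on $\Sigma$, giving the blowdown; and for $\pi_{R}$ the pairing of $d(\xi\cdot\g'(t))$ with the kernel vector does collapse, after the cancellation you describe via $z_{1}(\OM_{1}\cdot\g')+z_{2}(\OM_{2}\cdot\g')=0$, to $\xi\cdot\g''(t)$, which is nonzero on $\Xi_{\Lambda}$, giving the fold. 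Two small points you should still make explicit: that the corank on $\Sigma$ is exactly one (exhibit a nonvanishing $2\times 2$ minor of the $(z_{1},z_{2})$-block), and that ``drops rank simply'' requires $d(\xi\cdot\g'(t))\neq 0$ along $\Sigma$, which uses that the lines of the complex are not tangent to $\g$ (and, on the tangential set, the hypothesis $\langle\g'',\xi\rangle\neq 0$).

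The one genuine error is in the order bookkeeping of your first stage. The phase $\xi\cdot(x-\g(t)-s\OM)$ is not homogeneous of degree one in the joint variables $(s,\xi)$, so $(s,\xi)$ cannot serve as phase variables; and even applying the order formula formally with $N=4$ phase variables, an order-zero amplitude, and $n_{X}=n_{Y}=3$ gives $0+\tfrac{4}{2}-\tfrac{6}{4}=+\tfrac{1}{2}$, not $-\tfrac{1}{2}$. The correct count is obtained by first carrying out the $s$-integration, which produces $2\pi\,\delta(\xi\cdot\OM)$ and leaves a genuine oscillatory integral with the two phase variables $(z_{1},z_{2})$ parametrizing $\OM^{\perp}$; then the order is $0+\tfrac{2}{2}-\tfrac{6}{4}=-\tfrac{1}{2}$. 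Equivalently, the kernel is the delta distribution conormal to the codimension-two incidence relation $Z$ inside the six-dimensional manifold $\Cc\times\Rb^{3}$, hence lies in $I^{2/2-6/4}(N^{*}Z)=I^{-1/2}(N^{*}Z)$. With that step repaired, the argument is sound.
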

We refer the reader to \cite{GG} for the definitions of fold and blowdown singularities.
\begin{lemma}
	The wavefront set of the Schwartz kernel of $\Tc_{\g}^{*}\Tc_{\g}$ satisfies 
	\[
	WF(\Tc_{\g}^{*}\Tc_{\g})\subset \Delta \cup \Lambda,
	\]
	where $\Delta$ and $\Lambda$ are defined 
	as follows: 
	\Beq\label{Diagonal relation}
	\Delta =\left\{(x,\xi; x,\xi): x= \g(t) + s\theta, \xi\in \theta^{\perp}\setminus \{0\}\right\} \mbox{ and }
	\Eeq
	\begin{align}\label{Artifact relation}
	\Lambda =\left\{\left(x,\xi,y,\frac{\tau}{\tilde \tau}\xi\right):x=\g(t)+\tau\theta,y=\g(t)+\tilde\tau\theta,\xi \in \theta^\perp\setminus \{0\}, \g'(t)\cdot\xi=0, \tau\neq 0\neq \tilde{\tau} \right\}.
	\end{align}
	The condition imposed on the curve in the definition of $ \Xi'' $ entails the clean intersection of the sets $\Delta$ and $\Lambda$. We have 
	\[
	\Delta \cap \Lambda=\{(x,\xi; x,\xi): x=\g(t)+ s\theta, \xi\in \theta^{\perp}\setminus\{0\}, \g'(t)\cdot \xi=0\}. 
	\]
$\Delta \cap \Lambda$ is a smooth submanifold of codimension $k=1$ in both $\Delta$ or $\Lambda$.
	
	
\end{lemma}
\begin{lemma} \cite{LanThesis} The Lagrangian $\Lambda$ defined in \eqref{Artifact relation} arises as a flowout from the set $\pi_{R}(\Sigma)$.
\end{lemma}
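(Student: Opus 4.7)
The plan is to realize $\Lambda$ as the Hamiltonian flowout of a defining function of the coisotropic submanifold $W := \pi_{R}(\Sigma) \subset T^{*}\Rb^{3}$. Since $\pi_{R}$ has a fold along $\Sigma$, the restriction $\pi_{R}|_{\Sigma}$ is an immersion and $W$ is a smooth submanifold of codimension one, hence automatically coisotropic, admitting a one-dimensional characteristic foliation.

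First I would produce an explicit defining function for $W$ near a fixed point $(x_{0}, \xi_{0}) \in W$. The non-degeneracy condition $\xi_{0}\cdot \g''(t_{0}) \neq 0$ from the definition of $\Xi_{\Lambda}$ in \eqref{The three sets} lets me apply the implicit function theorem to $\xi\cdot \g'(t) = 0$, producing a smooth function $t(\xi)$, homogeneous of degree zero in $\xi$, with $\xi\cdot \g'(t(\xi)) \equiv 0$. Then
\[
h(x, \xi) := \xi\cdot\bigl(x - \g(t(\xi))\bigr)
\]
is a local defining function for $W$, and the chain rule combined with $\xi\cdot \g'(t(\xi)) = 0$ kills the $\partial_{\xi}t$ contribution, giving the clean formula $H_{h} = \bigl(x - \g(t(\xi)),\ -\xi\bigr)$.

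Next I would integrate $H_{h}$. Homogeneity of $t(\xi)$ forces $t(\xi(\lambda)) \equiv t_{0}$ along the flow, so the system decouples and integrates to $\xi(\lambda) = e^{-\lambda}\xi_{0}$ and $x(\lambda) = \g(t_{0}) + e^{\lambda}(x_{0} - \g(t_{0}))$. Writing $x_{0} - \g(t_{0}) = \tilde{\tau}\theta$ and $x(\lambda) - \g(t_{0}) = \tau\theta$ gives $\tau/\tilde{\tau} = e^{\lambda}$, hence $\xi(\lambda) = (\tilde{\tau}/\tau)\xi_{0}$. The flowout Lagrangian
\[
\{(\phi_{\lambda}(y, \eta), (y, \eta)) : (y, \eta) \in W,\ \lambda \in \Rb\}
\]
then reproduces \eqref{Artifact relation}: both base points lie on the same line $\g(t_{0}) + \Rb\theta$, the covectors satisfy $\xi \in \theta^{\perp}$ together with $\g'(t_{0})\cdot \xi = 0$, and the second covector is precisely $(\tau/\tilde{\tau})\xi$.

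The main technical point to handle is the construction of $t(\xi)$: the implicit function theorem requires $\partial_{t}(\xi\cdot \g'(t)) = \xi\cdot \g''(t) \neq 0$, which is exactly the transversality hypothesis built into $\Xi_{\Lambda}$. Beyond that, the second standing assumption on $\g$ (the uniform bound on intersection points of planes with the curve) guarantees that only finitely many tangency branches ever occur, so the local flowout descriptions assemble into a single Lagrangian without extra contributions and the identification with \eqref{Artifact relation} is global.
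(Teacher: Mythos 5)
Your argument is correct and is essentially the computation the paper itself relies on: Section~3 deliberately gives no proof of this lemma, deferring to \cite{LanThesis}, and the standard route there (following \cite{Greenleaf-Uhlmann-Duke1989}) is exactly yours --- the fold image $\pi_R(\Sigma)$ is a hypersurface with local defining function $h(x,\xi)=\xi\cdot\bigl(x-\g(t(\xi))\bigr)$, where $t(\xi)$ comes from the implicit function theorem using $\xi\cdot\g''(t)\neq 0$, and integrating $H_h=\bigl(x-\g(t(\xi)),-\xi\bigr)$ reproduces \eqref{Artifact relation}. The one point worth a remark is that a connected integral curve of $H_h$ only reaches $\tau/\tilde\tau=e^{\lambda}>0$, so the literal flowout is the portion of \eqref{Artifact relation} with $\tau\tilde\tau>0$; the pairs with $x$ and $y$ on opposite sides of $\g(t)$ constitute the remaining smooth piece of $\Lambda$, and ``flowout'' in the statement is understood, as in \cite{Greenleaf-Uhlmann-Duke1989}, to mean this full Lagrangian obtained by saturating $\pi_R(\Sigma)$ along its characteristic lines.
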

\subsection{Paired Lagrangian distributions}
We will analyze the operators $\Tc_{\g}$ and $\Tc_{\g}^{*}\Tc_{\g}$ in the framework of $I^{p,l}$ classes of distributions. We refer the reader to the three seminal works on this subject \cite{Melrose-Uhlmann, Guillemin-Uhlmann, Greenleaf-Uhlmann-Duke1989}.
For the convenience of the reader, we give a quick summary of the properties of  the $I^{p,l}$ class of distributions \cite{Guillemin-Uhlmann} that we require in this paper. 

Let $u\in I^{p,l}(\Delta,\Lambda)$, where $\Delta$ and $\Lambda$ are two cleanly intersecting Lagrangians with intersection $\Sigma=\Delta \cap \Lambda$. As an example, the reader may take $\Delta$ and $\Lambda$ from \eqref{Diagonal relation} and \eqref{Artifact relation}. 

Then 
\begin{enumerate}
	\item  $ WF(u) \subset \Delta\cup \Lambda$.
	\item Microlocally, the Schwartz kernel of $u$ equals the Schwartz kernel of a pseudodifferential operator of
	order $p + l$ on $\Delta\setminus\Lambda$ and that of a classical Fourier integral operator of order $p$
	on $\Lambda\setminus\Delta$.
	\item $I^{p,l}\subset I^{p^\prime,l^\prime} \text{ if } p\leq p^\prime \text{ and } l\leq l^\prime$.
	\item  $\cap_lI^{p,l}(\Delta,\Lambda) \subset I^p(\Lambda)$.
	\item $\cap_p I^{p,l}(\Delta,\Lambda) \subset $ The class of smoothing operators.
	\item The principal symbol $\sigma_{0}(u)$ on $\Delta \setminus \Sigma$ has the singularity on $\Sigma$ as a conormal distribution of order $l-\frac{k}{2}$, where $k$ is the codimension of $\Sigma$ as a submanifold of $\Delta$ or $\Lambda$.
	\item  If the principal symbol  $\sigma_{0}(u) = 0$ on $ \Delta \setminus \Sigma$, then $ u \in I^{p,l-1}(\Delta,\Lambda)+I^{p-1,l}(\Delta,\Lambda)$.
	\item $u$ is said to be elliptic if the principal symbol $\sigma_{0}(u)\neq 0$ on $\Delta\setminus \Sigma$ if $k\geq 2$, and for $k=1$, if $\sigma_{0}(u)\neq 0$ on each connected component of $\Delta\setminus \Sigma$.

\end{enumerate}

The Lagrangian $\Lambda$ defined in \eqref{Artifact relation} arises as a flowout, and the main tool in the construction of a relative left parametrix for our operator $\Tc_{\g}^{*}\Tc_{\g}$ is the following composition calculus due to  Antoniano and Uhlmann \cite{AntonianoandUhlmann}: 
\begin{theorem}[\cite{AntonianoandUhlmann}]\label{Th2.1}
	If $A \in I^{ p,l} (\Delta,\Lambda )$ and $B \in I^{ p^\prime,l^\prime} ( \Delta,\Lambda)$, then composition of $A$ and $B$,
	$A\circ B \in I^{ p+p^\prime +\frac{k}{2},l+l^\prime - \frac{k}{2}} (\Delta,\Lambda)$ and the prinicipal symbol, $\sigma_0 (A \circ B) = \sigma_0 (A)\sigma_0 (B)$, where, $k$ is the codimension of $\Sigma$ as a submanifold of either $\Delta$ or $\Lambda$.
\end{theorem}

Let $B$ be the ball that appears in the definition of the set $\Xi$ (see \eqref{XiDef}) above.
Let $K\subset \Xi'$ be a closed conic subset. The space of compactly supported distributions in $B$ whose wavefront set is contained in $K$ will be denoted by $\mathcal{E}^\prime_K(B)$. We now state the main result.

\begin{theorem}\label{Main theorem} Let  
	$\Xi_0 \subseteq \Xi'$ be such that $\overline{\Xi}_0 \subseteq \Xi' \cup \Xi''$ and $K$ be a closed conic subset of \  $\Xi_0$. 
	There exists an operator $\mathcal{B}\in I^{0,1}(\Delta,\Lambda)$ and an operator $\Ac\in I^{-1/2}(\Lambda)$ such that for any symmetric $m$-tensor field $f$ with coordinates in $\mathcal{E}^\prime_K(B)$, 
	\[
	\mathcal{B}\mathcal{T}_{\g}^{*}\mathcal{T}_{\g}f  = f + \mathcal{A}f + \text{smoothing terms.}
	\]
\end{theorem}
\begin{remark}
The condition $ \Xi_0\subseteq \Xi' $ implies that the points $ (x,\xi,x,\xi)\in \Delta \setminus \Sigma $ and also gives the ellipticity of the operator $ \Tc_{\g}^{*}\Tc_{\g} $; see \eqref{principal symbol}.  Furthermore, the condition $ \overline{\Xi}_0 \subseteq \Xi' \cup \Xi'' $ ensures the applicability of the functional calculus from \cite{AntonianoandUhlmann} (see the statement of Theorem \ref{Th2.1} above).
\end{remark}
The proof of this result is based on a suitable adaptation of the techniques from  \cite{Greenleaf-Uhlmann-Duke1989,Lan2003,Ramaseshan2004,Krishnan-Mishra} to the TRT setting. To this end, we compute the principal symbol of the operator $\Tc_{\g}^{*}\Tc_{\g}$ on the diagonal $\Delta$ away from the set $\Sigma$ and use this to construct a relative left parametrix for this operator. 
Our inversion procedure introduces an additional error term (in addition to smoothing terms) because we are working with a restricted transverse ray transform. This error term is a Fourier integral operator associated to the known Lagrangian $\Lambda$; see \eqref{Artifact relation}.
\section{Principal symbol of the operator $\mathcal{T}_\gamma^{*}\mathcal{T}_\gamma$}\label{Sect:Principal Symbol}

In this section, we give the principal symbol matrix of the operator $\Tc_{\gamma}^{*}\Tc_{\gamma}$ and show that it is elliptic. 

The operator $\Tc_{\gamma}^{*}\Tc_{\gamma}$ can be written as  
\[
\Tc_{\g}^{*}\Tc_{\g}= \sum\limits_{i=0}^{m}\left[ \Rc_{\g}^*\left( \OM^{j_1}_{1} \cdots \OM^{j_{m-i}}_{1}\OM^{j_{m-(i-1)}}_{2}\cdots \omega^{j_m}_{2} \omega^{l_1}_{1} \cdots \OM^{l_{m-i}}_{1}\omega^{l_{m-(i-1)}}_{2}\cdots \omega^{l_m}_{2}\right)\Rc_{\g}\right],\]
where $\Rc_{\g}$ is the restricted scalar ray transform (that is, ray transform of functions) and $\Rc_{\g}^{*}$ is its formal $L^{2}$ adjoint. The set $K$ below is as in the statement of Theorem \ref{Main theorem}.

\begin{proposition}\label{prop:principal symbol}
	The principal symbol matrix $A_{0}(x,\xi)$ of the operator $\Tc_{\g}^{*}\Tc_{\g}$ for $(x,\xi) \in K$ is 
	{\footnotesize
		\begin{align}\label{principal symbol}
		A_{0}(x,\xi)=\sum\limits_{j}\sum\limits_{i=0}^{m}\frac{2\pi\omega^{j_1}_{1}(t_j)\cdots \omega^{j_{m-i}}_{1}(t_j)\OM^{j_{m-(i-1)}}_{2}\cdots \omega^{j_m}_{2}(t_j) \omega^{l_1}_{1}(t_j)\cdots\OM^{l_{m-i}}_{1}(t_j)\omega^{l_{m-(i-1)}}_{2}(t_j)\cdots \omega^{l_m}_{2}(t_j)}{|\xi| \lvert (\g'(t_{j}(\xi_{0}))\cdot \xi_0)\rvert \lvert(\g(t_j(\xi_{0}))-x)\rvert}.
		\end{align}
	}
	In \eqref{principal symbol} above, $\xi_0$ is the unit vector in the direction of $\xi$, $j$ varies over the number of  intersection points of the plane $H(x,\xi)$ with the given curve $\g$. 
\end{proposition}
The derivation of this formula is similar to the one in \cite{LanThesis,Ramaseshan2004,Krishnan-Mishra} and therefore we do not give the details here. 

\begin{proposition}\label{ellipticity}
For $(x, \xi) \in K$, the principal symbol matrix $A_{0}(x,\xi)$ for $\xi\neq 0$ is injective. 
\end{proposition}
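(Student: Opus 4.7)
The plan is to exhibit $A_{0}(x,\xi)$ as a sum of positive semidefinite rank-one matrices, reduce injectivity to the vanishing of each rank-one contribution, and then combine a polarization argument with the Kirillov-Tuy condition.

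First, I would rewrite each term in \eqref{principal symbol} as the outer product $T_{k,i}\,T_{k,i}^{\top}$ scaled by the strictly positive factor $c_{k} = 2\pi / \bigl(|\xi|\,|\g'(t_{k})\cdot\xi_{0}|\,|\g(t_{k})-x|\bigr)$, where $T_{k,i}$ is the coefficient vector (in a fixed lexicographic basis of symmetric $m$-tensors) of the rank-one symmetric tensor
\begin{equation*}
\omega_{1}(t_{k})^{\odot(m-i)} \odot \omega_{2}(t_{k})^{\odot i}.
\end{equation*}
Here $\omega_{1}(t_{k}),\omega_{2}(t_{k})$ are the vectors from \eqref{omegai's} associated with the unit direction $\omega(t_{k}) := (\g(t_{k})-x)/|\g(t_{k})-x|$, so that $\{\omega_{1}(t_{k}),\omega_{2}(t_{k})\}$ is an orthonormal basis of the $2$-plane $\omega(t_{k})^{\perp}$. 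Each summand is manifestly positive semidefinite, hence so is $A_{0}(x,\xi)$. In particular, $A_{0}(x,\xi)f = 0$ forces
\begin{equation*}
0 = \langle A_{0}(x,\xi)f, f\rangle = \sum_{k} c_{k}\sum_{i=0}^{m}\bigl|\langle f,\, \omega_{1}(t_{k})^{\odot(m-i)} \odot \omega_{2}(t_{k})^{\odot i}\rangle\bigr|^{2},
\end{equation*}
so that for every intersection point $\g(t_{k})$ of $x + \xi^{\perp}$ with $\g$ and every $i \in \{0,\dots,m\}$,
\begin{equation*}
f_{j_{1}\cdots j_{m}}\,\omega_{1}^{j_{1}}(t_{k})\cdots\omega_{1}^{j_{m-i}}(t_{k})\,\omega_{2}^{j_{m-i+1}}(t_{k})\cdots\omega_{2}^{j_{m}}(t_{k}) = 0.
\end{equation*}

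Second, I would polarize the $m+1$ scalar identities obtained at each fixed $t_{k}$. The binomial expansion
\begin{equation*}
(a\omega_{1}(t_{k}) + b\omega_{2}(t_{k}))^{\odot m} = \sum_{i=0}^{m}\binom{m}{i} a^{m-i} b^{i}\,\omega_{1}(t_{k})^{\odot(m-i)} \odot \omega_{2}(t_{k})^{\odot i}
\end{equation*}
shows that $\langle f, v^{\odot m}\rangle = 0$ for every $v \in \mathrm{span}\{\omega_{1}(t_{k}),\omega_{2}(t_{k})\} = \omega(t_{k})^{\perp}$. Viewing $f$ as a homogeneous polynomial of degree $m$ on $\Rb^{3}$, this says precisely that $f$ vanishes identically on the $2$-plane $\omega(t_{k})^{\perp}$, and hence $f$ is divisible by the linear form $y \mapsto \omega(t_{k}) \cdot y$.

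Finally, by the Kirillov-Tuy condition (Definition \ref{K-T Defn} and the ensuing remark) the plane $x + \xi^{\perp}$ meets $\g$ in at least $m+1$ points with pairwise linearly independent directions $\omega(t_{1}),\dots,\omega(t_{m+1})$. The associated linear forms on $\Rb^{3}$ are then pairwise non-proportional, hence pairwise coprime irreducibles in the polynomial ring, so $f$ is divisible by their product---a polynomial of degree $m+1$. Since $\deg f = m$, this forces $f \equiv 0$, establishing injectivity of $A_{0}(x,\xi)$. The only step demanding real care is the rank-one decomposition in the first paragraph, where one has to track the lexicographic ordering of indices and the multinomial weights arising from tensor symmetrization; once that is in place, the polarization identity converts the $m+1$ scalar vanishings at each $t_{k}$ into vanishing on a $2$-plane, and the Kirillov-Tuy condition then closes the argument in a single line.
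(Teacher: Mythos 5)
Your proposal is correct, and it reaches the conclusion by a genuinely different route from the paper. Both arguments start from the same observation that $A_{0}(x,\xi)$ is a positively weighted sum of rank-one terms (the paper writes this as $A_{0}=PP^{t}$ with $P$ as in \eqref{Matrix P}); the difference is in how full rank is established. The paper argues on the column side: after normalizing the spherical frame so that $\OM_{2}\parallel\xi$, it selects $(m+1)(m+2)/2$ columns of $P$ and proves their linear independence through the explicit expansion in Lemma \ref{adding rank1}, which culminates in a Vandermonde-type full-rank statement for the matrix $A$ with columns $b_{j}^{\odot(l-2)}$ (and requires the closing remark of \S\ref{Sect:Principal Symbol} to handle a general frame). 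You instead argue on the kernel side: positive semidefiniteness reduces $A_{0}f=0$ to the $m+1$ scalar contractions vanishing at each intersection point $t_{k}$, the binomial/polarization identity upgrades these to $\langle f,v^{\odot m}\rangle=0$ for all $v\in\OM(t_{k})^{\perp}$, and then the identification of $f$ with a degree-$m$ homogeneous polynomial plus divisibility by $m+1$ pairwise coprime linear forms forces $f\equiv0$. Your version is shorter, avoids the coordinate normalization entirely (only the span of $\{\OM_{1}(t_{k}),\OM_{2}(t_{k})\}$ matters), and invokes the Kirillov--Tuy condition exactly in the form of Definition \ref{K-T Defn} (pairwise independence of the directions), whereas the paper's computation additionally yields the block ranks of Lemma \ref{rank lemma}, information not needed for Theorem \ref{Main theorem}. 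The one place where you and the paper are at the same level of informality is the bookkeeping between the Einstein-summed quadratic form and the lexicographically indexed matrix (a positive diagonal weight of multinomial coefficients), which affects neither positive semidefiniteness nor the kernel, and which you correctly flag as the only step needing care.
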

\begin{proof}
	For $(x,\xi)\in T^* \mathbb{R}^3 \setminus {\{0\}} $, 
	without loss of generality, we choose a spherical coordinate system such that $\OM(\cdot)$ and $\OM_{1}(\cdot)$ are parallel to the plane $H(x,\xi)$ and $\OM_{2}(\cdot)$ is in the direction of $\xi$. 
	
	The plane $H(x,\xi)$ intersects the curve $\g$ in at least $ (m+1) $ points, say $t_1,\cdots,t_{m+1},\cdots, t_{j^\prime}$. 
	
	Denote the collection of unit vectors in the directions $x-\g(t_{1}), \cdots, x-\g(t_{j^\prime})$ by
	\begin{align*}
	\mathbb{A}= \bigg\{\OM(t_{j})=\frac{x-\g_{j}}{|x-\g_{j}|}: \g_j=\g(t_j), 1\leq j\le j'\bigg  \} \mbox{ where } j'\geq m+1.
	\end{align*} 
	Now any two of the vectors in $\mathbb{A}$ are linearly independent since $(x,\xi)\in \Xi$. This in turn implies that for almost all points $x$, any two of the vectors in the collection 
	\begin{align*}
	\mathbb{A}'= \bigg\{\OM_{1}(t_{j}):  1\leq j\le j' \bigg  \}
	\end{align*} 
	where, recall, $\OM_{1}(t_j)$ corresponding to $\OM(t_j)$ defined in \eqref{omegai's}, is also linearly independent.	
	
	Denote the matrix $U_{q}= U_{\underbrace{1\cdots 1}_{q}\underbrace{2 \cdots 2}_{m-q}}$, for $0\le q\le m$, whose columns are
	\begin{align*}
	\lb \frac{2\pi}{|\xi| \lvert (\g'(t_{j}(\xi_{0}))\cdot \xi_0)\rvert \lvert(\g(t_j(\xi_{0}))-x)\rvert}\rb^{1/2}\omega_{1}(t_j)^{\odot q} \odot \omega_{2}(t_j)^{\odot m-q}\mbox{ for } 1\leq j \leq j',
	\end{align*}
where $\odot$ denotes the symmetric tensor product.
	Let us denote the matrix $P$ with column blocks $\{U_{i}\}, \, 0\leq i\leq m$: 
	\begin{align}\label{Matrix P}
	P = \begin{pmatrix}
	U_{m} & U_{m-1}& \cdots& U_{q} & \cdots&U_{0}    
	\end{pmatrix}
	\end{align}
	Note that the number of rows in $P$ is $(m+2)(m+1)/2$.
	
	We have
	\begin{align*}
	A_{0}(x,\xi)= PP^t,			\end{align*}
	with $P$ defined in \eqref{Matrix P}. In Lemma \ref{adding rank}, we show that Rank$(P)=(m+2)(m+1)/2$.
	Since $P$ has real entries, $\mbox{Rank}(PP^{t})= \mbox{Rank}(P) $. Therefore the principal symbol matrix $ A_{0}(x,\xi) $ has full rank on $ \Delta \setminus \Sigma $. 
	\end{proof}
\begin{lemma}\label{Lem3.3}
	For $ q \geq 1$, consider a collection of $q+1$ pair-wise independent vectors $v_1,\cdots,v_{q+1}$ in $\Rb^3$. Then the collection of vectors
	\[
	v_1^{\odot q},\cdots,v_{q+1}^{\odot q}
	\]
	is also linearly independent.
\end{lemma}
\begin{proof}
	We can write 
	$v_{i}=c_{i1} v_1+ c_{i2} v_2$ for $i\ge 3$ and for two non-zero constants $c_{i1}$ and $c_{i2}$. 
	
	Assume 
	\[
	\sum\limits_{i=1}^{q+1} d_{i} v_i^{\odot q}=0,
	\]
	for some nonzero constants $d_{i}$.
	Then using the above, we have 
	\[
	d_1 v_1^{\odot q} +d_2v_2^{\odot q}+\sum\limits_{i=3}^{q+1} d_{i} \lb c_{i1}v_1+c_{i2}v_2 \rb^{\odot q}=0
	\]
	From this, we get,
	\[
	\lb d_1 + \sum\limits_{i=3}^{q+1}d_ic^{q}_{i1} \rb v_1^{\odot q}+ \sum\limits_{i=3}^{q+1}\lb \sum\limits_{r=1}^{q-1} \binom{q}{r} d_{i} c^{q-r}_{i1} c^{r}_{i2}\rb v_1^{\odot q-r}\odot v_2^{\odot r} + \lb d_2 + \sum\limits_{i=3}^{q+1}d_ic^{q}_{i2}\rb v_2^{\odot q}=0.
	\]
	Since $v_1$ and $v_2$ are linearly independent, the collection of tensors $\{v_1^{\odot q-r} \odot v_2^{\odot r}: 1\le r\le q-1\}$ is also linearly independent. Thus  
	\Beq\label{Eq1}
	\lb d_1 + \sum\limits_{i=3}^{q+1}d_ic^{q}_{i1} \rb=0,
	\Eeq
	\Beq\label{Eq2}
	\lb d_2 + \sum\limits_{i=3}^{q+1}d_ic^{q}_{i2} \rb=0
	\Eeq
	and
	\Beq\label{Eq3} \lb \sum\limits_{i=3}^{q+1} \binom{q}{r} d_{i} c^{q-r}_{i1} c^{r}_{i2}\rb=0, \quad \mbox{for}\quad 1\le r\le q-1.
	\Eeq
	Since $c_{i1}$ and $c_{i2}$ are both non-zero constants for all $i$, by factoring out $c_{i1}c_{i2}$, the system of equations in \eqref{Eq3} can be written as 
	\[\lb \sum\limits_{i=3}^{q+1}  d_{i} c^{q-r-1}_{i1} c^{r-1}_{i2}\rb=0, \quad \mbox{for}\quad 1\le r\le q-1.  \]
	This can written as a matrix system 
	\begin{align*}
	BY=0,
	\end{align*}
	where 
	\begin{align}\label{matrix system}
	B= \begin{pmatrix}
	c_{31}^{q-2} &  c_{41}^{q-2} &\cdots &c_{q+1,1}^{q-2}\\
	c_{31}^{q-3}c_{32} &c_{41}^{q-3}c_{42}&\cdots & c_{q+1,1}^{q-3}c_{q+1,2}\\
	\vdots & \vdots & \ddots&\vdots\\
	c_{32}^{q-2} & c_{4,2}^{q-2}& \cdots & c_{q+1,2}^{q-2}
	\end{pmatrix}
	\end{align}
	and $Y=(d_3,\cdots,d_{q+1})^t$. 
	
	Let 
	\[
	q_{i}= \frac{c_{i2}}{c_{i1}} \mbox{ and } b_{i} = (c_{i1},c_{i2}) \mbox{ for } 3\le i\le q+1.
	\]
	Since any two vectors from $\{ v_{i} : 3\le i\le q+1 \}$
	are linearly independent, we have that any two vectors from the set $\{ b_{i}: 3\le i \le q+1\} $ are also linearly independent. This gives that the ratios $q_{i}$'s are all distinct. 
	
	We are interested in proving that $\mbox{Kernel}(B)=\{0\}$. It is enough to prove that $\mbox{Kernel}(B^t)= \{0\}$. Now  \[B^tX=0,\] gives
	\[ \sum\limits_{r=0}^{q-2} c^{q-2-r}_{i1}c_{i2}^{r} e_{r}=0, \]
	for $3\le i \le q+1$ and $X=(e_0,\cdots,e_{q-2})^t$.
	This in turn gives
	\begin{equation}\label{roots of polynomial}
	\sum\limits_{r=0}^{q-2} q_{i}^{r} e_{r}=0, \quad \mbox{for} \quad 3\le i \le q+1.
	\end{equation}
	We arrive at a Vandermonde matrix and hence $X=0$. This then implies that $Y=0$. Now going back to \eqref{Eq1} and \eqref{Eq2}, we have that $d_1=d_2=0$.
\end{proof}
\begin{lemma}\label{rank lemma} The matrix $U_q$ satisfies 
	$\mbox{Rank}(U_q)\geq q+1$.
\end{lemma}
\begin{proof}
	We are interested in computing the principal symbol at points in $\Xi$. We have at least $m+1$ pairwise linearly independent vectors $\OM(t_{1}),\cdots, \OM(t_{m+1})$. The corresponding perpendicular vectors $\OM_{1}(t_{1}),\cdots, \OM_{1}(t_{m+1})$ are pairwise linearly independent and are also perpendicular to $\xi$. Now the collection of vectors $\{ \OM_{1}(t_{1})^{\odot q},\cdots, \OM_{1}(t_{q+1})^{\odot q}\}$ has rank $q+1$ by Lemma \ref{Lem3.3}. Therefore the rank of the matrix whose columns are $\OM_{1}(t_{1})^{\odot q},\cdots,\OM_{1}(t_{m+1})^{\odot q}$ is at least $q+1$. Finally, the rank of $U_{q}$ is at least $q+1$ as well, since $\OM_{2}(t_{k})$'s are in the direction of the nonzero vector  $\xi$.
	\end{proof}

\begin{lemma}\label{adding rank1}
	Consider an arbitrary $U_{s}$ for $0\leq s\leq m$. Assume that the  values of $t_{k}$ corresponding to $s+1$ linearly independent columns of $U_{s}$ are $t_{j_{1}},\cdots, t_{j_{s+1}}$. Any column among these $s+1$ linearly independent columns cannot be written as a linear combination of the columns of the matrices $U_q$ for $ 0\le q\le m, q\neq s $ and the remaining  $s$ linearly independent columns of the matrix $ U_s $.
\end{lemma}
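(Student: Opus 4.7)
The plan rests on the coordinate choice made in the proof of Proposition~\ref{ellipticity}: for every $k$ the vector $\omega_1(t_k)$ lies in $\xi^{\perp}$, while $\omega_2(t_k) = \epsilon_k\, \eta$ with $\eta := \xi/|\xi|$ and $\epsilon_k \in \{\pm 1\}$ (forced because $\omega,\omega_1,\omega_2$ is an orthonormal frame and both $\omega(t_k)$ and $\omega_1(t_k)$ are already perpendicular to $\xi$). Consequently each column of $U_p$ factors as
\[
\omega_1(t_k)^{\odot p}\odot \omega_2(t_k)^{\odot(m-p)} = \epsilon_k^{m-p}\, \omega_1(t_k)^{\odot p}\odot \eta^{\odot(m-p)}.
\]
I then identify a symmetric $m$-tensor $T$ with its homogeneous polynomial $y \mapsto T(y,\ldots,y)$ on $\mathbb{R}^3$ and split $y = y_{\perp} + y_{\parallel}\eta$ with $y_{\perp} \in \xi^{\perp}$. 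Under this identification, the above column corresponds to the polynomial $\epsilon_k^{m-p}\,(\omega_1(t_k)\cdot y_{\perp})^p\, y_{\parallel}^{m-p}$, which is pure of degree $m-p$ in $y_{\parallel}$.

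Suppose, for contradiction, that one of the $l+1$ chosen linearly independent columns of $U_l$, indexed by $t_{k_i}$, satisfies an identity
\[
\omega_1(t_{k_i})^{\odot l}\odot \omega_2(t_{k_i})^{\odot(m-l)} = \sum_{p\neq l}\sum_{k=1}^{m+1} b_{p,k}\,\omega_1(t_k)^{\odot p}\odot \omega_2(t_k)^{\odot(m-p)} + \sum_{j\neq i} c_j\,\omega_1(t_{k_j})^{\odot l}\odot \omega_2(t_{k_j})^{\odot(m-l)}.
\]
Evaluating both sides on $y^{\otimes m}$ and collecting coefficients of powers of $y_{\parallel}$, the monomial $y_{\parallel}^{m-l}$ receives contributions only from indices with $p=l$: the $p\neq l$ terms have a different $y_{\parallel}$-degree and decouple by linear independence of the polynomials $\{y_{\parallel}^{m-p}\}_{p=0}^{m}$. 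This forces
\[
\epsilon_{k_i}^{m-l}\,(\omega_1(t_{k_i})\cdot y_{\perp})^l = \sum_{j\neq i} c_j\, \epsilon_{k_j}^{m-l}\, (\omega_1(t_{k_j})\cdot y_{\perp})^l \qquad \text{for all } y_{\perp}\in \xi^{\perp},
\]
which, read as an identity of symmetric $l$-tensors on $\xi^{\perp}$ (and using $\epsilon_{k_j}\neq 0$), is a nontrivial linear relation among $\omega_1(t_{k_1})^{\odot l},\ldots,\omega_1(t_{k_{l+1}})^{\odot l}$. Tensoring with the common factor $\eta^{\odot(m-l)}$ converts this back into a linear relation among the $l+1$ chosen columns of $U_l$, contradicting their assumed linear independence.

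The one thing I would verify carefully at the outset is that the coordinate normalization of Proposition~\ref{ellipticity} is consistent across all the intersection points $\gamma(t_k)$ simultaneously; this is automatic once $\xi$ is placed along the polar axis of the spherical parametrization, since then the third component of every $\omega(t_k)\in\xi^{\perp}$ vanishes and the defining formula $\omega_2=(0,-\sin\theta_2,\cos\theta_2)$ forces $\omega_2(t_k)\parallel\xi$. The main conceptual step of the argument — the grading of the tensor product by the power of $\eta$ — is what makes the different blocks $U_p$ noninteracting, and is exactly what fails if $\omega_2(t_k)$ varies with $k$ outside the $\eta$-direction. Given Lemma~\ref{adding rank1}, Lemma~\ref{adding rank} then follows immediately by summing the independent-column contributions block by block: $\mathrm{rank}(P) = \sum_{p=0}^{m}(p+1) = (m+1)(m+2)/2$.
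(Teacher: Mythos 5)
Your proof is correct, and its central mechanism --- choosing coordinates so that each $\omega_2(t_k)$ equals $\pm\eta$ with $\eta=\xi/|\xi|$ fixed, and then grading symmetric $m$-tensors by their degree in $\eta$ so that blocks $U_p$ with different $p$ cannot interact --- is exactly the mechanism underlying the paper's argument: there the same decoupling appears as the vanishing of the coefficients $c_{r_1r_2}$ in \eqref{6.3}, obtained by expanding everything in the linearly independent symmetric products of $\{\omega_1(t_{k_1}),\omega_1(t_{k_2}),\xi\}$. Where you genuinely diverge is the endgame. The paper does not invoke the lemma's hypothesis that the $l+1$ selected columns of $U_l$ are independent; instead it writes each $\omega_1(t_{k_i})$, $i\ge 3$, in the basis $\{\omega_1(t_{k_1}),\omega_1(t_{k_2})\}$ and kills the surviving degree-$(m-l)$ relation by a Vandermonde-type rank argument for the matrix $A$ with columns $b_j^{\odot(l-2)}$, in effect re-deriving the fact that pairwise independence of the $\omega_1(t_{k_j})$ forces independence of their $l$-th symmetric powers (this is also what lets the authors close the ``exactly $p+1$'' claim of Lemma \ref{rank lemma} afterwards). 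You instead observe that the surviving graded piece, once multiplied back by $\eta^{\odot(m-l)}$ and after absorbing the nonzero signs $\epsilon_{k_j}^{m-l}$, is verbatim a nontrivial linear dependence among the $l+1$ columns assumed independent in the statement --- an immediate contradiction. Your route is shorter, leans legitimately on the stated hypothesis, and avoids the bookkeeping of \eqref{6.3}; the paper's route is longer but self-contained, proving the needed independence rather than assuming it. Both are valid, and your closing observation that Lemma \ref{adding rank} then follows by summing ranks block by block matches the paper's intent.
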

\begin{proof}
	After reordering, we may assume with loss of generality that $t_{j_{i}}=t_{i}$ for $1\leq i\leq s+1$.		Fix one of the linearly independent columns from $U_s$, say,  $ \OM_1(t_{1})^{\odot s}\odot \xi^{\odot m-s}$ (note that since $\OM(t_1)$ and $\OM_1(t_1)$ are parallel to the plane $H(x,\xi)$, $\OM_2(t_1)$ is in the direction of $\xi$). Suppose there exists constants $c_{qi}$'s and $ d_{j} $'s such that
	\begin{align}\label{relation}
	\OM_{1}(t_{1})^{\odot s}\odot \xi^{\odot m-s}= \sum_{q=0,q\neq s}^{m}\sum_{i=1}^{j'}c_{qi} \omega_{1}(t_i)^{\odot q}\odot \xi^{\odot m-q} +\sum_{j=2}^{s+1}d_{j} 	\omega_{1}(t_{j})^{\odot s}\odot \xi^{\odot m-s}.	
	\end{align}
	We write $\omega_{1}(t_{i})= \sum_{j=1}^{2} a_{ij}\omega_{1}(t_{j}) $
	for $ i\ge 3 $ for some constants $a_{ij}$. Substituting this above, we have,
	{\footnotesize
		\begin{align}
		\label{relation1}	\omega_{1}(t_{1})^{\odot s}\odot \xi^{\odot m-s}=
		& \sum_{q=0,q\neq s}^{m} \left(c_{q1}\omega_{1}(t_{1})^{\odot q}+c_{q2} \omega_1(t_{2})^{\odot q}+ \sum_{i=3}^{j'}c_{qi} \left(\sum_{j=1}^{2} a_{ij}\omega_{1}(t_{j})\right)^{\odot q} \right)\odot \xi^{\odot m-q}\\& 
		\label{relation2}+  \sum_{j=3}^{s+1}d_{j}\lb a_{j1}\rb ^s \omega_{1}(t_{1})^{\odot s}\odot \xi^{\odot m-s} + \left(d_{2}+\sum_{j=3}^{s+1}{d}_{j}\lb a_{j2}\rb ^s\right)\omega_1(t_{2})^{\odot s}\odot \xi^{\odot m-s}\\&
		\label{relation3}+ \sum_{r=1}^{s-1}\sum_{j=3}^{s+1}\wt{d}_{j} \lb a_{j1}\rb ^{s-r} \lb a_{j2}\rb ^{r}\omega_{1}(t_{1})^{\odot s-r}\odot \omega_{1}(t_{2})^{\odot r}\odot\xi^{\odot m-s}
		\end{align}
	}
	In the sum above, \eqref{relation1} is the expansion of the first summand in \eqref{relation} in terms of $\OM_1(t_1)$ and $\OM_1(t_2)$, \eqref{relation2} contains terms involving the powers of $\OM_1(t_1)^{\odot{s}}$ and $\OM_1(t_2)^{\odot{s}}$ when the second summand in \eqref{relation} is expanded in terms of $\OM_1(t_1)$ and $\OM_1(t_2)$, and \eqref{relation3} consists of the remaining terms from the second summand in \eqref{relation}. Also $\wt{d}_{j}$ are certain new constants involving $d_{j}$'s and binomial coefficients.
	This implies, for certain constants $c_{r_{1}r_{2}}$, 	
	\begin{align*}
	&\sum_{q=0,q\neq s}^{m}\sum_{r_1+r_2=q} c_{r_1r_2} \omega_{1}(t_{1})^{\odot r_1}\odot \omega_{1}(t_{2})^{\odot r_2}\odot \xi^{\odot m-q}\\&+ \left( \sum_{j=3}^{s+1}d_{j}\lb a_{j1}\rb ^s-1 \right)\omega_{1}(t_{1})^{\odot s}\odot \xi^{\odot m-s} + \left(d_{2}+\sum_{j=3}^{s+1} {d}_{j}\lb a_{j2}\rb ^s\right)\omega_{1}(t_{2})^{\odot s}\odot \xi^{\odot m-s}\\&+ \sum_{r=1}^{s-1}\sum_{j=3}^{s+1}\wt{d}_{j} \lb a_{j1}\rb ^{s-r} \lb a_{j2}\rb ^{r} \omega_{1}(t_1)^{\odot s-r}\odot \omega_{1}(t_2)^{\odot r}\odot\xi^{\odot m-s}=0.
	\end{align*}
	The vectors $ \{ \omega_{1}(t_{1}),\omega_{1}(t_{2}),\xi\} $ are linearly independent. Therefore the collection of tensors $\{\omega_{1}(t_{1})^{\odot j_1}\odot\omega_{1}(t_{2})^{\odot j_2}\odot\xi^{\odot j_3}: j_1+j_2+j_3= m\}$ is also linearly independent.  
	Thus 
	\begin{align}
	&\label{6.31}c_{r_1r_2}=0\\
	&\label{6.32}\sum_{j=3}^{s+1}{d}_{j}\lb a_{j1}\rb ^s-1=0\\
	&\label{6.33}d_{2}+\sum_{j=3}^{s+1}\wt{d}_{j}\lb a_{j2}\rb ^s=0\\
	&\label{6.34}\sum_{j=3}^{s+1}\wt{d}_{j} \lb a_{j1}\rb ^{s-r} \lb a_{j2}\rb^{r}=0 \ \ \ \mbox{for}\  1\le r\le s-1.
	\end{align}
	Note that the product $a_{j1}a_{j2}$ appears as a factor in \eqref{6.34} and since $a_{j1} $ and $ a_{j2} $ are both non-zero, we can cancel it out and after this write \eqref{6.34} as a matrix system:  
	\begin{align*}
	AX=0,
	\end{align*}
	where	
	\begin{equation}
	A=\begin{pmatrix}
	a_{31}^{s-2} &  a_{41}^{s-2} &\cdots &a_{s+1,1}^{s-2}\\
	a_{31}^{s-3}a_{32} &a_{41}^{s-3}a_{42}&\cdots & a_{s+1,1}^{s-3}a_{s+1,2}\\
	\vdots & \vdots & \ddots&\vdots\\
	a_{32}^{s-2} & a_{4,2}^{s-2}& \cdots & a_{s+1,2}^{s-2}
	\end{pmatrix}
	\end{equation}
	and 
	\begin{equation*} 
	X=(\wt{d}_3,\wt{d}_4,\cdots,\wt{d}_{s+1})^{t}.
	\end{equation*}
	Now the argument proceeds exactly as in Lemma \ref{Lem3.3}.	Therefore we have $\{\wt{d}_{j}=0, 3\le j\le s+1\}$, and this implies $d_{j}=0$ for $3\leq j\leq s+1$. However, this contradicts \eqref{6.32}. 
	\end{proof}
\begin{lemma}\label{adding rank}
	The rank of $A_{0}$ for $\xi\neq 0$ is $(m+2)(m+1)/2$.
\end{lemma}
\begin{proof}
	From the previous lemma, we have that $\mbox{Rank}(P)\geq (m+2)(m+1)/2$. Since $A_{0}=PP^t$ and $\mbox{Rank}(P)=\mbox{Rank}(PP^t)$, we have that $\mbox{Rank}(A_{0})\geq (m+2)(m+1)/2$. However $A_{0}$ has exactly $(m+2)(m+1)/2$ rows and columns. Hence $\mbox{Rank}(A_{0})=(m+2)(m+1)/2$.
\end{proof}
Now going back to the proof of Lemma \ref{rank lemma}, we have that Rank$(U_{q})$ is exactly $q+1$ as well.
\begin{remark}
	In the general case of fixing a spherical coordinate system independent of the plane $H(x,\xi)$, the arguments would follow similarly as above, except that, one would need to consider linear combinations of the components $\Tc_{i}$ of the TRT $\Tc$ in the proofs above. 
\end{remark}
\section{Microlocal inversion}\label{Sect: Microlocal Inversion}
In this section, we give a relative left parametrix for the operator 
$ \T_{\g}^* \T_{\g}$. This will complete the proof of Theorem \ref{Main theorem}.\\
\begin{proof}[Proof of Theorem \ref{Main theorem}]
	Now that ellipticity of $A_{0}(x,\xi)$ is shown, the construction of the relative left parametrix follows the arguments of \cite{Ramaseshan2004,Krishnan-Mishra}. We sketch the proof.
	
	Since $ A_0(x,\xi) $ is  a symmetric matrix of order $(m+1)(m+2)/2$, we diagonalize $ A_0(x,\xi) $ by an orthogonal matrix $\Oc$ such that 
	$$ A_0(x,\xi) = \Oc D \Oc^t ,$$
	where $  D $ is the diagonal matrix consisting  eigenvalues of $ A_0 $ and $\Oc$ is an orthogonal matrix whose columns are eigenvectors corresponding to the eigenvalues of $A_0  $. Since $ A_0 $ has full rank, all diagonal  entries in $ D $ are non-zero. Let 
	\begin{align*}
	B_0(x,\xi) = \Oc D^{-1} \Oc^t
	\end{align*}
	where $ D^{-1} $ is the inverse of $D$. 
	We have  $$ B_0(x,\xi) A_0(x,\xi)=\mbox{Id}.$$
	Define the matrix $ b_0 $ as 
	\begin{equation}
	b_{0}=\begin{cases}
	B_{0} \mbox{ if } (x,\xi)\in \Xi_{0},\\
	0 \mbox{ otherwise}.
	\end{cases}
	\end{equation}
	and $ \mathcal{B}_0 $ be the operator with symbol matrix $ b_0(x,\xi) $. The entries of $B_{0}(x,\xi)$ belong to the symbol of an  $I^{p,l}(\Delta,\Lambda)$ class, since the possible singularities of $ \Oc $ and $ D^{-1} $ are only on $\Sigma$.  
	 Note that away from the intersection $\Sigma$, $A_{0}$ is a symbol of order $-1$ and since $B_{0}$ is formed by inverting $A_{0}$, $B_{0}$ is the symbol of a pseudodifferential operator of order $1$ away from the intersection.  Therefore the operator $\Bc_{0}\in I^{0,1}(\Delta,\Lambda)$.  
	
	Now the operator $\Tc_{\g}^{*}\Tc_{\g}\in I^{-1,0}(\Delta,\Lambda)$, and since the principal symbol of the composition $\Bc_{0}\Tc_{\g}^{*}\Tc_{\g}$ on $\Delta$ away from the intersection $\Delta \cap \Lambda$ is the product of the respective principal symbols by \cite{AntonianoandUhlmann}, which by construction is the identity on $\Delta$ away from $\Delta\cap \Lambda$, we have that $\mathcal{B}_0\T_{\g}^{*}{\T}_{\g} \in I^{-\frac{1}{2},\frac{1}{2}}(\Delta,\Lambda)$ using the composition calculus of Antoniano-Uhlmann; see Theorem \ref{Th2.1}.

	
	Define $T_1 = \mathcal{B}_0\T_{\g}^{*}{\T}_{\g}-\mbox{Id}$. By construction the principal symbol of $T_{1}$ is $0$. 
Let us recall the symbol calculus for $I^{p,l}\Delta, \Lambda)$ which is given by the following exact sequence \cite{Guillemin-Uhlmann}: 
	\begin{align*}
	0 \rightarrow I^{p, l-1}(\Delta, \Lambda) + I^{p-1, l}(\Delta, \Lambda)\rightarrow I^{p,l}(\Delta, \Lambda)\xrightarrow{\sigma_0} S^{p,l}(\Delta, \Sigma) \rightarrow 0
	\end{align*}
where $ S^{p,l}(\Delta, \Sigma)$ denotes the space of product type symbols, see \cite[Definition 2.3]{Krishnan-Mishra}.	With the help of this exact sequence, we decompose $T_{1}$ as 
	$T_{1}= T_{11}+T_{12}$ where $T_{11}\in I^{-\frac{3}{2},\frac{1}{2}}$ and $T_{12} \in I^{-\frac{1}{2},-\frac{1}{2}}$.
	
	Since $A_{0}$ has full rank, we can find two matrices $t_{11}$ and $t_{12}$ such that the principal symbol $\sigma_0(T_{1j}) = t_{1j}A_0$ for $j =1,2$. 
	\par Let $\mathcal{B}_{11}$ and $\mathcal{B}_{12}$ be the operators having symbol matrices $-t_{11}$ and $-t_{12}$ respectively. For $\mathcal{B}_{1} = \mathcal{B}_{11}+\mathcal{B}_{12}$, define 
	$T_2=  (\mathcal{B}_0+\mathcal{B}_1)\T_{\g}^{*} \T_{\g}-\mbox{Id}$. We have 
	\begin{align*}
	T_2&=  (\mathcal{B}_0+\mathcal{B}_1)\T_{\g}^{*} \T_{\g}-\mbox{Id}\\
	&= \mathcal{B}_{11}\T_{\g}^{*} \T_{\g}+\mathcal{B}_{12}\T_{\g}^{*} \T_{\g}+\mathcal{B}_0 \T_{\g}^{*} \T_{\g}-\mbox{Id}\\
	&= \underbrace{\mathcal{B}_{11}\T_{\g}^{*} \T_{\g}+T_{11}}_{K_1} +\underbrace{\mathcal{B}_{12}\T_{\g}^{*} \T_{\g} +T_{12}}_{K_2}.
	\end{align*}
	In the above expression $K_1\in I^{-\frac{3}{2},\frac{1}{2}}$ and $K_2 \in I^{-\frac{1}{2},-\frac{1}{2}}$. Also, by construction, $\sigma_0(K_1)=0$ and $\sigma_0(K_2)=0$ because $\sigma_0(\mathcal{B}_{11}\T_{\g}^{*} \T_{\g})= -\sigma_0(T_{11})$ and $\sigma_0(\mathcal{B}_{12}\T_{\g}^{*} \T_{\g})= -\sigma_0(T_{12})$. Therefore we can again use the exact sequence to decompose $K_1$ and $K_2$ as follows:
	\begin{align*}
	K_1 &= K_{11} + K_{12}, \quad \mbox{ with } K_{11} \in I^{-\frac{5}{2}, \frac{1}{2}} , K_{12} \in I^{-\frac{3}{2}, -\frac{1}{2}}\\
	K_2 &= K_{21} + K_{22}, \quad \mbox{ with } K_{21} \in I^{-\frac{3}{2}, -\frac{1}{2}} , K_{22} \in I^{-\frac{1}{2}, -\frac{3}{2}}.
	\end{align*}
	Putting this in $T_2$, we get 
	\begin{align*}
	T_2 = \underbrace{K_{11}}_{T_{20}} + \underbrace{K_{12}+K_{21}}_{T_{21}}+\underbrace{K_{22}}_{T_{22}}
	\end{align*}
	where $T_{20} \in I^{-\frac{5}{2}, \frac{1}{2}}$, $T_{21} \in  I^{-\frac{3}{2}, -\frac{1}{2}}$, $T_{22} \in I^{-\frac{1}{2}, -\frac{3}{2}}$. Therefore
	\[ T_2 \in \sum_{j=0}^{2}I^{-\frac{1}{2}-2+j, \frac{1}{2}-j}.
	\]
	Proceeding recursively, we get a sequence of operators 
	\[
	T_N \in \sum_{j=0}^N I^{-\frac{1}{2}-N+j,\frac{1}{2}-j}.
	\]
	We write this as
	\[ 
	T_N\in \sum_{j=0}^{[\frac{N}{2}]}I^{-\frac{1}{2}-N+j,\frac{1}{2}-j}+ 
	\sum_{j=[\frac{N}{2}]+1}^NI^{-\frac{1}{2}-N+j,\frac{1}{2}-j}.
	\]
	In the first sum $-\frac{1}{2}-N+j\leq -\frac{1}{2}-N+\left[\frac{N}{2}\right]$ and $\frac{1}{2}-j \leq \frac{1}{2}$. Similarly in the second sum, $-\frac{1}{2}-N+j\leq-\frac{1}{2}$ and $ \frac{1}{2}-j \leq -\frac{1}{2}-\left[\frac{N}{2}\right]$. Now we use 
	$ I^{p,l} \subset I^{p^\prime, l^\prime}$ for $ p \leq p^\prime$, $l \leq l^\prime$ to get 
	\[
	\sum_{j=0}^{[\frac{N}{2}]}I^{-\frac{1}{2}-N+j,\frac{1}{2}-j} \in I^{-\frac{1}{2}-N+\left[\frac{N}{2}\right],\frac{1}{2}}\text{ and } 
	\sum_{j=[\frac{N}{2}]+1}^NI^{-\frac{1}{2}-N+j,\frac{1}{2}-j} \in I^{-\frac{1}{2},-\frac{1}{2}-\left[\frac{N}{2}\right]}.
	\]
	In the limit $N\rightarrow \infty$,  the first term in the above expression is a smoothing term by the property that $\cap_pI^{p,l}(\Delta,\Lambda) \subset \mathcal{C}^\infty$ and the second term is an operator $\mathcal{A}$ in $I^{-\frac{1}{2}}(\Lambda)$ by the property $\cap_lI^{p,l}(\Delta,\Lambda) \subset I^p(\Lambda)$.
	Finally, we define $\mathcal{B} =\mathcal{B}_0+\mathcal{B}_1+\cdots$ and from the construction above, we get,
	$$ \mathcal{B}\T_{\g}^{*} \T_{\g}(f) = f+\mathcal{A}f+\mathcal{C}^\infty.$$
	This completes the proof of Theorem \ref{Main theorem}.
	\end{proof}
\section*{Acknowledgments:} 
VK was supported by US NSF grant DMS 1616564 and India SERB Matrics Grant, MTR/2017/000837.

\bibliographystyle{plain}
\bibliography{reference}

\end{document}